\newtheorem{theorem}{Theorem}
\newtheorem{proposition}{Proposition}
\newtheorem{lemma}{Lemma}
\newtheorem{definition}{Definition}
\newtheorem{example}{Example}
\newtheorem{corollary}{Corollary}
\newtheorem{remark}{Remark}
\numberwithin{equation}{section}
\numberwithin{remark}{section}
\numberwithin{theorem}{section}
\numberwithin{proposition}{section}
\numberwithin{definition}{section}
\numberwithin{lemma}{section}
\numberwithin{claim}{section}
\numberwithin{corollary}{section}
\numberwithin{conjecture}{section}
\newcommand{\bull}{\ensuremath{{}\bullet{}}}
\newcommand{\cpn}{\ensuremath{\mathbb{P}^{N}}}
\newcommand{\slnc}{\ensuremath{SL(N+1,\mathbb{C})}}
\newcommand{\ra}{\ensuremath{\longrightarrow}}
\newcommand{\om}{\ensuremath{\omega}}
\newcommand{\vps}{\ensuremath{\varphi_{\sigma}}}
\newcommand{\cn}{\ensuremath{\mathbb{C}^{N+1}}}
\newcommand{\xhyp}{\ensuremath{X\times\mathbb{P}^{n-1}}}
\newcommand{\elam}{\ensuremath{\mathbb{E}_{\lambda_{\bull}}}}
\newcommand{\emu}{\ensuremath{\mathbb{E}_{\mu_{\bull}}}}
\begin{document}
\bibliographystyle{alpha}
\title[numerical criterion ]{A numerical criterion for K-energy maps of Algebraic manifolds $^\dagger$ }
\author{Sean Timothy Paul}   
\thanks{$\dagger$ The author is supported in part by NSF DMS grant no.1104448}
\email{stpaul@math.wisc.edu}
\address{l'Institut Henri Poincar\'e, 11 Rue Pierre et Marie Curie, 75005 Paris, France}
\address{and the Mathematics Dept. Univ. of Wisconsin, Madison}
 \subjclass[2000]{53C55}
\keywords{Discriminants, Resultants, K-Energy maps, Projective duality, Geometric Invariant Theory, K\"ahler Einstein metric, Stability .}
\date{October 2, 2012}
 \vspace{-5mm}
\begin{abstract} {Let $X\ra\cpn$ be a smooth, linearly normal, complex projective variety.  Let $\mathcal{B}$ denote the corresponding space of Bergman metrics. In this paper it is shown that the Mabuchi energy of $(X,{\om_{FS}}|_X)$ is bounded below on all algebraic degenerations in $\mathcal{B}$ if and only if it is \emph{uniformly} bounded below on $\mathcal{B}$ .}
   \end{abstract}
\maketitle
\setcounter{tocdepth}{1}

 \section{Statement of the Main Theorem  }
 Let $X\ra \cpn$ be a smooth subvariety embedded by a very ample complete linear system. Let $\om_{FS}$ denote the Fubini Study metric on $\cpn$ induced by a choice of positive definite Hermitian form on $\cn$.  As usual $\nu_{\om}$ denotes the K-energy map (see \cite{mabuchi} for the definition of the K-energy map )  of the restriction  $\om:={\om_{FS}}|_X$ . Let $\mathcal{B}$ denote the Bergman metrics associated to this embedding. The main result of this paper is the following.
 \begin{theorem}\label{mainthm}
 Assume that for every degeneration $\lambda$ in $\mathcal{B}$ there is a (finite) constant $C(\lambda)$ such that
 \begin{align*}
 \lim_{\alpha\ra 0}\nu_{\om}(\varphi_{\lambda(\alpha)})\geq  C(\lambda) \ .
 \end{align*}
 Then there is a \textbf{uniform} constant $C$ such that for all $\vps\in \mathcal{B}$ we have the lower bound
 \begin{align*}
 \nu_{\om}(\vps)\geq C   \ .
 \end{align*}
 \end{theorem}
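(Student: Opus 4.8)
The plan is to pass to the language of pairs of vectors in $\slnc$-representations via the asymptotic expansion of the K-energy along Bergman metrics, to read the hypothesis as a Hilbert--Mumford numerical inequality, and then to invoke a Kempf--Ness type criterion whose very conclusion is a \emph{uniform} lower bound --- so that the ``uniformity'' advertised in the theorem comes for free once the numerical criterion is in place. Recall that $\mathcal{B}$ is the symmetric space $\slnc/SU(N+1)$, with $\vps$ the Bergman potential attached to $\sigma\in\slnc$, and that a degeneration $\lambda$ is a one-parameter subgroup $\lambda:\mathbb{C}^*\ra\slnc$, the curve $\alpha\mapsto\varphi_{\lambda(\alpha)}$ being the geodesic ray running off to infinity as $\alpha\ra 0$. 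The first step is to recall the expansion of $\nu_{\om}$ on $\mathcal{B}$ in terms of the hyperdiscriminant $\Delta_{\xhyp}$ of $X\times\mathbb{P}^{n-1}$ and the $X$-resultant (Chow form) $R_X$: there are rational $\slnc$-representations $E\ni\Delta_{\xhyp}$ and $F\ni R_X$, which we may take to have the same degree after replacing these vectors by suitable tensor powers, and a constant $c_0>0$, such that
\begin{align*}
\nu_{\om}(\vps)=c_0\Big(\log\|\sigma\cdot\Delta_{\xhyp}\|^2-\log\|\sigma\cdot R_X\|^2\Big)+O(1)
\end{align*}
uniformly in $\sigma\in\slnc$, the norms being those induced on $E,F$ by the chosen Hermitian form on $\cn$. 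From here the geometry of $(X,\om)$ enters only through the pair $(\Delta_{\xhyp},R_X)$.

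Next I translate the hypothesis. Fix $\lambda$ and write $\Delta_{\xhyp}=\sum_k\delta_k$ for the decomposition into $\lambda$-weight spaces; with $w_\lambda(\Delta_{\xhyp}):=\min\{k:\delta_k\neq 0\}$ one has $\log\|\lambda(\alpha)\cdot\Delta_{\xhyp}\|^2=2\,w_\lambda(\Delta_{\xhyp})\log|\alpha|+O(1)$ as $\alpha\ra 0$, and likewise for $R_X$. Substituting into the expansion, $\lim_{\alpha\ra 0}\nu_{\om}(\varphi_{\lambda(\alpha)})$ is $+\infty$, finite, or $-\infty$ according as $w_\lambda(\Delta_{\xhyp})$ is less than, equal to, or greater than $w_\lambda(R_X)$. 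Hence the hypothesis --- that this limit is bounded below by a finite constant $C(\lambda)$, for every $\lambda$ --- holds if and only if
\begin{align*}
w_\lambda(\Delta_{\xhyp})\ \leq\ w_\lambda(R_X)\qquad\text{for every one-parameter subgroup }\lambda:\mathbb{C}^*\ra\slnc .
\end{align*}
The constants $C(\lambda)$ themselves are discarded; this reduction of a family of $\lambda$-dependent bounds to a single constant-free condition is the pivot of the argument.

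Finally I invoke the numerical criterion for pairs: if $v\in E$ and $w\in F$ are nonzero vectors in rational $\slnc$-representations of the same degree with $w_\lambda(v)\leq w_\lambda(w)$ for every one-parameter subgroup $\lambda$, then
\begin{align*}
\inf_{\sigma\in\slnc}\Big(\log\|\sigma\cdot v\|^2-\log\|\sigma\cdot w\|^2\Big)\ >\ -\infty .
\end{align*}
Applied to $v=\Delta_{\xhyp}$ and $w=R_X$ this yields a constant $C_1$, and the expansion of the first step then gives $\nu_{\om}(\vps)\geq c_0 C_1+O(1)=:C$ for \emph{all} $\vps\in\mathcal{B}$, which is precisely the assertion. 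Note that no separate ``uniformization'' step is needed: the criterion delivers one constant at once.

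The main obstacle is this criterion --- the implication (numerical inequality) $\Rightarrow$ (finite infimum), a Kempf--Ness theorem for pairs on $\slnc/SU(N+1)$. Along a single geodesic ray $\sigma=\lambda(\alpha)$ from the base point, the expansion $\log\|\sigma\cdot v\|^2=2\,w_\lambda(v)\log|\alpha|+O(1)$ (together with the analogous relation for $w$) already yields, under the hypothesis, a lower bound for $\log\|\sigma\cdot v\|^2-\log\|\sigma\cdot w\|^2$ on that ray; but the $O(1)$ there is essentially the log-norm of the lowest-weight component of $v$ along $\lambda$, which degenerates as $\lambda$ varies, so the ray-by-ray bounds do not assemble into a uniform one. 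The point is rather that a minimizing sequence $\sigma_j$ for $\log\|\sigma\cdot v\|^2-\log\|\sigma\cdot w\|^2$ --- which, were the infimum $-\infty$, must leave every compact subset of $\slnc/SU(N+1)$, since the functional is left $SU(N+1)$-invariant and geodesically convex, so that a bounded minimizing sequence would force a finite infimum --- selects its own asymptotic direction, and that direction, by Kempf's theory of optimal destabilizing one-parameter subgroups and the fact that $v,w$ lie in algebraic representations, may be taken rational, with $\log\|\sigma\cdot v\|^2-\log\|\sigma\cdot w\|^2\ra-\infty$ along it; comparing leading weights then forces $w_\lambda(v)>w_\lambda(w)$, contradicting the hypothesis. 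Carrying out this extraction --- the properness analysis of these norm functionals on the symmetric space and its boundary at infinity, where projective duality between $X\times\mathbb{P}^{n-1}$ and its hyperdiscriminant locus, and the explicit structure of the Chow form of a smooth, linearly normal $X$, are brought to bear --- is the technical heart of the matter.
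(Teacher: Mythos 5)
Your reduction is exactly the paper's: Theorem \ref{core} replaces $\nu_{\om}$ on $\mathcal{B}$ by the Kempf--Ness functional $p_{R\Delta}$ of the pair $(R,\Delta)$ up to a uniform $O(1)$; the expansion $p_{vw}(\lambda(\alpha))=(w_{\lambda}(w)-w_{\lambda}(v))\log|\alpha|^2+O(1)$ turns the hypothesis into the numerical condition $w_{\lambda}(\Delta)\leq w_{\lambda}(R)$ for all one-parameter subgroups (Definition \ref{numerical}); and the conclusion then rests entirely on the implication ``numerical semistability of the pair $\Rightarrow$ finite infimum of $p_{R\Delta}$ over all of $G$,'' which is Proposition \ref{energyasymp} read contrapositively. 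Up to that point your argument and the paper's coincide, including the observation that the ray-by-ray $O(1)$'s do not assemble into a uniform bound.

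The gap is that you assert this last implication rather than prove it, and the justification you sketch does not go through as stated. The paper obtains it in two steps: Corollary \ref{infi}, which identifies $\inf_{G}p_{vw}$ with $\log\tan^2 d_g(\overline{\mathcal{O}}_{vw},\overline{\mathcal{O}}_{v})$, so that finiteness of the infimum is \emph{equivalent} to disjointness of the two orbit closures in $\mathbb{P}(\mathbb{V}\oplus\mathbb{W})$; and Theorem \ref{numericalcriterion}, the converse Hilbert--Mumford statement that numerical semistability forces this disjointness. Your appeal to ``Kempf's theory of optimal destabilizing one-parameter subgroups'' is an appeal to the affine instability theory for a single vector degenerating to the origin (Example \ref{hmss}); what is needed here is the \emph{projective, pair} version --- Property (P): if the $G$-invariant closed set $\mathscr{Z}=\mathbb{P}(\mathbb{V}\oplus\{0\})\cap\overline{\mathcal{O}}_{vw}$ is nonempty, some algebraic one-parameter subgroup of $G$ carries $[(v,w)]$ into it --- and this is not a formal consequence of Kempf's theorem; it is the new content of the paper, proved (and only under an additional covering hypothesis on $T$-invariant affine opens) by a Richardson-type argument using the Cartan decomposition $G=KTK$, $T$-invariant regular functions separating $\overline{T\kappa\cdot[w]}$ from $\mathscr{Z}$, and a partition of unity on the maximal compact $K$. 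Two smaller points: your claim that $\log\|\sigma\cdot v\|^2-\log\|\sigma\cdot w\|^2$ is geodesically convex on $G/SU(N+1)$ is unjustified (each log-norm is convex, their difference need not be; continuity already gives what you use it for), and the rationality of the limiting direction of a minimizing sequence is precisely the point that has to be established, not a fact one can quote. As written, the central step of your proof is a restatement of Theorem \ref{numericalcriterion}, not a proof of it.
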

 Theorem \ref{mainthm} reduces the problem of obtaining a lower bound for the Mabuchi energy restricted to $\mathcal{B}$ to the problem of checking for a bound on each algebraic one parameter subgroup in $\mathcal{B}$. This latter problem has a ``polyhedral-combinatorial'' characterization (see Corollary \ref{polytopes}) in terms of  the weight polyhedra of the $X$-\emph{resultant} and $X$- \emph{hyperdiscriminant} of Cayley and Gelfand, Kapranov, and Zelevinsky \cite{gkz}.

\section{semistable pairs}
Let $G$ be one of the classical subgroups of $GL(N+1,\mathbb{C})$, for example the special linear group.
 Let $(\mathbb{V},\rho)$ be a finite dimensional complex rational representation of $G$. Recall that $(\mathbb{V},\rho)$ is  {rational} provided that for all $\alpha\in \mathbb{V}^{\vee}$ (dual space) and $v\in \mathbb{V}\setminus \{0\}$ the  {matrix coefficient}
\begin{align}
\varphi_{\alpha , v}:G\ra \mathbb{C} \quad \varphi_{\alpha , v}(\sigma):=\alpha(\rho(\sigma)\cdot v) 
\end{align}
is a {regular function} on $G$, that is
\begin{align}
\varphi_{\alpha , v}\in \mathbb{C}[G]:= \mbox{affine coordinate ring of $G$} .
\end{align}

For any vector space $\mathbb{V}$ and any $v\in \mathbb{V}\setminus\{0\}$ we  let $[v]\in\mathbb{P}(\mathbb{V})$ denote the line through $v$. If $\mathbb{V}$ is a $G$ module define $\mathcal{O}_{v}:=G\cdot [v]\subset \mathbb{P}(\mathbb{V})$ the projective orbit of $[v]$ .  We let $\overline{\mathcal{O}}_{v}$ denote the Zariski closure of this orbit. Given a pair  $(v\in\mathbb{V}\setminus\{0\} \ ; \ w\in \mathbb{W}\setminus\{0\} )$ we consider the orbits inside the projective space of the direct sum  
\begin{align}
\mathcal{O}_{vw}:=G\cdot [(v,w)]  \subset \mathbb{P}(\mathbb{V}\oplus\mathbb{W}) \ , \ \mathcal{O}_{v} \subset \mathbb{P}(\mathbb{V}\oplus\{0\})\ .
\end{align}
  Our basic definition is the following.
  \begin{definition}\label{pair}
\emph{The pair $(v,w)$ is \textbf{\emph{semistable}} if and only if}  $ \overline{\mathcal{O}}_{vw}\cap\overline{\mathcal{O}}_{v}=\emptyset $ .
  \end{definition} 
\begin{example}\label{hmss} \emph{Let $\mathbb{V}=\mathbb{C}, v=1$ (the trivial one dimensional representation). Let $\mathbb{W}$ be any $G$ module.
Then  the pair $(1,w)$ is semistable if and only if
\begin{align}
0\notin \overline{G\cdot w}\subset \mathbb{W} \quad (\mbox{\emph{affine} closure} ) \ .
\end{align}
 In other words ,  $w$ is semistable in the usual (Hilbert-Mumford) sense.}
\end{example}

\subsection{Numerical Semistability}
Let $H$ denote any maximal algebraic torus of $G$.  Let $M_{\mathbb{Z}}=M_{\mathbb{Z}}(H)$ denote the {character lattice} of $H$
\begin{align}
M_{\mathbb{Z}}:= \mbox{Hom}_{\mathbb{Z}}(H,\mathbb{C}^*) \ . 
\end{align}

As usual, the dual lattice is denoted by $N_{\mathbb{Z}}$. It is well known that $ u\in N_{\mathbb{Z}}$ corresponds to an algebraic one parameter subgroup $\lambda^u$ of $H$. These are algebraic homomorphims $\lambda:\mathbb{C}^*\ra H$.  The correspondence is given by 
\begin{align}
(\cdot \ , \ \cdot) :N_{\mathbb{Z}}\times M_{\mathbb{Z}}\ra \mathbb{Z} \ , \ m(\lambda^u(t))=t^{(u , m)} \ .
\end{align}
 
 We introduce associated real vector spaces by extending scalars 
\begin{align} 
 \begin{split}
 &M_{\mathbb{R}}:= M_{\mathbb{Z}}\otimes_{\mathbb{Z}}\mathbb{R} \\
\ \\
& N_{\mathbb{R}}:= N_{\mathbb{Z}}\otimes_{\mathbb{Z}}\mathbb{R}\ .
\end{split}
\end{align}

Since $\mathbb{V}$ is rational it decomposes under the action of $H$ into  {weight spaces}
\begin{align}
\begin{split}
&\mathbb{V}=\bigoplus_{a\in {\mathscr{A}}}\mathbb{V}_{a}  \\
\ \\
& \mathbb{V}_{a}:=\{v\in \mathbb{V}\ |\ h\cdot v=a(h) v \ , \ h\in H\}
\end{split}
\end{align}
$\mathscr{A}$ denotes  the {support} of $\mathbb{V}$ 
\begin{align}
\mathscr{A}:= \{a \in M_{\mathbb{Z}}\ | \ \mathbb{V}_{a}\neq 0\} \ .
\end{align}
Given $v\in \mathbb{V}\setminus \{0\}$  the projection of $v$ into $\mathbb{V}_{a}$ is denoted by $v_{a}$. The support of any (nonzero) vector $v$ is then defined by
\begin{align}
\mathscr{A}(v):= \{a\in \mathscr{A}\ | \ v_{a}\neq 0\} \ .
\end{align}
\begin{definition} \emph{ Let $H$ be any maximal torus in $G$. Let $v\in \mathbb{V}\setminus\{0\}$ . The \textbf{\emph{weight polytope}} of $v$ is the compact convex lattice polytope $\mathcal{N}(v)$ given by}
\begin{align}\label{wtpolytope}
\mathcal{N}(v):=\mbox{\emph{convex hull}}\ \mathscr{A}(v) \subset M_{\mathbb{R}} \ .
\end{align}
\end{definition}

\begin{definition}
\emph{The \textbf{\emph{weight}}  $w_{\lambda}(v)$  of $\lambda$ on $v\in \mathbb{V}\setminus \{0\}$ is the integer}
\begin{align}
\begin{split}
&w_{\lambda}(v):= \mbox{\emph{min}}_{  x\in \mathcal{N}(v) }\ {u}(x)= \mbox{\emph{min}} \{ (a,u )\ |\  a \in \mathscr{A}(v)\}\ .
\ \\
& \lambda \sim u\in N_{\mathbb{Z}} \ .
\end{split}
\end{align}
\end{definition}
Alternatively observe that the weight of $\lambda$ on $v$ is the unique integer $w_{\lambda}(v)$ such that the limit
\begin{align}
\lim_{\alpha\ra 0}\alpha^{-w_{\lambda}(v)}\lambda(\alpha)\cdot v
\end{align}
exists in $\mathbb{V}$ and is not zero.


\begin{definition}\label{numerical}   \emph{Let $\mathbb{V},\mathbb{W}$ be $G$-modules. The pair $(v\in\mathbb{V}\setminus\{0\} \ , \ w\in \mathbb{W}\setminus\{0\} )$ is \textbf{\emph{numerically semistable}} if and only if  $\mathcal{N}(v)\subset \mathcal{N}(w)$    for all maximal algebraic tori $H\leq G$. Equivalently if and only if
\begin{align}
w_{\lambda}(w)\leq w_{\lambda}(v) \quad \mbox{for all one parameter subgroups $\lambda$ of $G$ . } \ .
\end{align}
  }
 \end{definition}

 \begin{proposition} (Paul \cite{paulcm2012}) \emph{Semistability implies numerical semistability}.
\begin{align}
\overline{\mathcal{O}}_{vw}\cap\overline{\mathcal{O}}_{v}=\emptyset \Rightarrow  \mathcal{N}(v)\subset \mathcal{N}(w) \ \mbox{ \emph{for all maximal algebraic tori} $H\leq G$ \ .   } 
\end{align}
\end{proposition}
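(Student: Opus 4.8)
The plan is to prove the contrapositive in a fixed maximal torus and then reduce the general statement to that case. Suppose that for some maximal torus $H \leq G$ we have $\mathcal{N}(v) \not\subset \mathcal{N}(w)$; equivalently, there is a rational point $u \in N_{\mathbb{Q}}$ (hence, after clearing denominators, a genuine one-parameter subgroup $\lambda = \lambda^u$) with $w_\lambda(w) > w_\lambda(v)$. I will produce from this a point of $\overline{\mathcal{O}}_{vw} \cap \overline{\mathcal{O}}_v$, contradicting semistability. All the torus-dependence is harmless: a maximal torus acts on $\mathbb{V} \oplus \mathbb{W}$ diagonally through its weight-space decompositions, so it suffices to argue for one torus $H$ and a one-parameter subgroup inside it.

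The heart of the matter is the following limit computation. Write $a := w_\lambda(v)$ and $b := w_\lambda(w)$, and let $v_0 := \lim_{\alpha \to 0} \alpha^{-a} \lambda(\alpha)\cdot v$, which exists in $\mathbb{V}\setminus\{0\}$ by the alternative description of the weight given just before Definition \ref{numerical}. Since $b > a$, we have
\begin{align*}
\alpha^{-a}\lambda(\alpha)\cdot (v,w) = \bigl(\alpha^{-a}\lambda(\alpha)\cdot v,\ \alpha^{b-a}\,\alpha^{-b}\lambda(\alpha)\cdot w\bigr) \longrightarrow (v_0, 0)
\end{align*}
as $\alpha \to 0$, because the first coordinate converges to $v_0$ while the second is $\alpha^{b-a}$ times a vector converging to a nonzero limit, and $b - a \geq 1 > 0$. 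On the one hand, for each $\alpha \neq 0$ the point $[\alpha^{-a}\lambda(\alpha)\cdot(v,w)] = [\lambda(\alpha)\cdot(v,w)]$ lies in $\mathcal{O}_{vw}$, so the limit $[(v_0,0)]$ lies in $\overline{\mathcal{O}}_{vw} \subset \mathbb{P}(\mathbb{V}\oplus\mathbb{W})$. On the other hand, $[(v_0,0)]$ lies in $\mathbb{P}(\mathbb{V}\oplus\{0\})$, and $v_0 = \lim_{\alpha\to 0}\alpha^{-a}\lambda(\alpha)\cdot v$ exhibits $[v_0]$ as a limit of points $[\lambda(\alpha)\cdot v] \in \mathcal{O}_v$, so $[(v_0,0)] \in \overline{\mathcal{O}}_v$. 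Hence $\overline{\mathcal{O}}_{vw} \cap \overline{\mathcal{O}}_v \neq \emptyset$, which is the desired contradiction.

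The one genuinely technical point I expect to be the main obstacle is the passage between a real point $u \in N_{\mathbb{R}}$ witnessing $\mathcal{N}(v) \not\subset \mathcal{N}(w)$ and an honest integral one-parameter subgroup. Since $\mathcal{N}(v)$ and $\mathcal{N}(w)$ are rational polytopes, if $\mathcal{N}(v) \not\subset \mathcal{N}(w)$ then there is a vertex of $\mathcal{N}(v)$ outside $\mathcal{N}(w)$, and by rationality one can separate it from $\mathcal{N}(w)$ by a rational (hence, after scaling, integral) linear functional $u$; this gives $w_{\lambda^u}(w) > w_{\lambda^u}(v)$. One must also check that scaling $u$ by a positive integer scales both weights by the same factor and hence preserves the strict inequality, which is immediate from the $\mathrm{min}$-of-linear-functional description of the weight. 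With that reduction in hand the limit argument above applies verbatim, so the proposition follows. (For the equivalence of the two formulations of numerical semistability asserted in Definition \ref{numerical}, the same separation-by-rational-functionals argument shows that $\mathcal{N}(v) \subset \mathcal{N}(w)$ holds if and only if $w_\lambda(w) \leq w_\lambda(v)$ for every one-parameter subgroup $\lambda$ of the relevant torus, so it is enough to have the second formulation available here.)
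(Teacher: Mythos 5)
Your argument is correct: the paper itself offers no proof of this proposition (it is cited to \cite{paulcm2012}), and your contrapositive via a destabilizing one-parameter subgroup $\lambda$ with $w_\lambda(w)>w_\lambda(v)$, followed by the computation $\lim_{\alpha\to 0}\alpha^{-w_\lambda(v)}\lambda(\alpha)\cdot(v,w)=(v_0,0)$ producing a point of $\overline{\mathcal{O}}_{vw}\cap\overline{\mathcal{O}}_{v}$, is exactly the standard argument one expects here. The rational-separation step handling the passage from $\mathcal{N}(v)\not\subset\mathcal{N}(w)$ to an integral one-parameter subgroup is also handled correctly, so there is nothing to add.
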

 The crucial result of this section is the following.
  \begin{theorem}\label{numericalcriterion} \emph{ The pair $(v,w)$  is numerically semistable if and only if $\overline{\mathcal{O}}_{vw}\cap\overline{\mathcal{O}}_{v}=\emptyset$ .}
 \end{theorem}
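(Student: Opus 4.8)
The forward implication is already the cited Proposition of Paul, so the content is the converse: numerical semistability (the polytope containment $\mathcal{N}(v)\subset\mathcal{N}(w)$ for every maximal torus) forces $\overline{\mathcal{O}}_{vw}\cap\overline{\mathcal{O}}_{v}=\emptyset$. I plan to argue the contrapositive. Suppose the orbit closures meet; pick a point $[(v',0)]$ in $\overline{\mathcal{O}}_{vw}\cap\overline{\mathcal{O}}_v$, with $v'\neq 0$ sitting in the $\mathbb{V}$-summand. By a standard orbit-closure argument in geometric invariant theory (the Hilbert–Mumford / Kempf one-parameter-subgroup lemma, applied to the $G$-variety $\overline{\mathcal{O}}_{vw}\subset\mathbb{P}(\mathbb{V}\oplus\mathbb{W})$ and the orbit of $[(v,w)]$), there exists a one-parameter subgroup $\lambda$ of $G$ such that $\lim_{\alpha\to 0}\lambda(\alpha)\cdot[(v,w)]$ lies in $\mathbb{P}(\mathbb{V}\oplus\{0\})$. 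Concretely this means $w_\lambda(w) > w_\lambda(v)$: the $\mathbb{W}$-component is killed strictly faster than the $\mathbb{V}$-component under the normalized limit. Choosing $H$ a maximal torus containing the image of $\lambda$, and letting $u\in N_{\mathbb{Z}}(H)$ be the cocharacter of $\lambda$, the inequality $w_\lambda(w)>w_\lambda(v)$ reads $\min_{x\in\mathcal{N}(w)}u(x) > \min_{x\in\mathcal{N}(v)}u(x)$, which exhibits a point of $\mathcal{N}(v)$ (a vertex minimizing $u$) lying strictly outside the halfspace $\{u\geq \min_{\mathcal{N}(w)}u\}$, hence outside $\mathcal{N}(w)$. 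This contradicts $\mathcal{N}(v)\subset\mathcal{N}(w)$ for that $H$, proving the converse.

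The one subtlety is the passage from ``the orbit closures meet'' to ``a one-parameter subgroup realizes a point of the intersection.'' For this I would invoke the version of the Hilbert–Mumford criterion valid over $\mathbb{C}$ for a (possibly non-reductive, but here classical hence reductive) group acting on a projective variety: if $y\in\overline{G\cdot x}$ then there is a $\lambda$ with $\lim_{\alpha\to 0}\lambda(\alpha)\cdot x\in G\cdot y$ — equivalently, Kempf's theory of optimal destabilizing one-parameter subgroups, or for tori simply the fact that $\overline{H\cdot[(v,w)]}$ is a toric variety whose boundary strata are reached by cocharacters. Since the statement we want only concerns the \emph{torus} polytopes $\mathcal{N}(v),\mathcal{N}(w)$, it in fact suffices to run the whole argument torus-by-torus: replace $G$ by a maximal torus $H$, note that $\overline{\mathcal{O}}_{vw}^{H}$ and $\overline{\mathcal{O}}_v^{H}$ are the affine/projective toric varieties governed by the polytopes, and observe that these meet if and only if the face of $\mathcal{N}(v)\oplus$-data selected by some $u$ drops into the $\mathbb{W}=0$ locus, which is exactly the failure of the polytope containment.

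The main obstacle is therefore not the combinatorics — once we are on a torus, the weight-polytope dictionary makes both directions essentially formal — but rather making sure the reduction to a single torus is legitimate: i.e.\ that $\overline{\mathcal{O}}_{vw}\cap\overline{\mathcal{O}}_v=\emptyset$ for the full group $G$ is \emph{equivalent} to the torus-level statement for \emph{all} maximal tori $H$, and not merely implied by it. The clean way to secure this is again Hilbert–Mumford: a $G$-orbit-closure intersection, if nonempty, is detected by \emph{some} one-parameter subgroup, hence by \emph{some} maximal torus $H$ containing it; conversely a torus-level intersection is a fortiori a $G$-level intersection. So the equivalence in Definition \ref{numerical} (``for all $H$'') matches precisely the $G$-level separation, and Theorem \ref{numericalcriterion} follows. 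I would close by remarking that this is the technical heart behind Theorem \ref{mainthm}: it converts the analytic lower bound on all algebraic degenerations into the GIT-separation of the resultant/hyperdiscriminant pair, whence a uniform bound.
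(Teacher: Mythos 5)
Your combinatorial translation is fine: a one-parameter subgroup $\lambda$ drives $[(v,w)]$ into $\mathbb{P}(\mathbb{V}\oplus\{0\})$ exactly when $w_{\lambda}(w)>w_{\lambda}(v)$, and by convex duality this is equivalent to the failure of $\mathcal{N}(v)\subset\mathcal{N}(w)$ for a maximal torus containing the image of $\lambda$. The gap is precisely at the step you flag as ``the one subtlety'': the claim that a nonempty intersection $\overline{\mathcal{O}}_{vw}\cap\overline{\mathcal{O}}_{v}$ is detected by a one-parameter subgroup. That claim is Property (P) of the paper (the projective analogue of Birkes' Property (A)), and it is \emph{not} an instance of the Hilbert--Mumford criterion or of Kempf's instability theorem. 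Kempf's theorem concerns a closed $G$-stable subset of an \emph{affine} $G$-variety; here the relevant set is the $G$-invariant linear subspace $\mathbb{P}(\mathbb{V}\oplus\{0\})$ met by a \emph{projective} orbit closure, and one cannot simply pass to the affine cone: a sequence $[\sigma_l\cdot(v,w)]$ converging to a point of $\mathbb{P}(\mathbb{V}\oplus\{0\})$ need not converge in $\mathbb{V}\oplus\mathbb{W}$ after any bounded rescaling. Moreover the version you quote --- $y\in\overline{G\cdot x}$ implies $\lim_{\alpha\to 0}\lambda(\alpha)\cdot x\in G\cdot y$ --- is false even in the affine setting (at best one lands in $\overline{G\cdot y}$). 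The paper itself is explicit that Property (P) ``may or may not hold'' for a general linear algebraic group and establishes it only under auxiliary hypotheses.

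Your fallback, ``run the whole argument torus-by-torus,'' is circular: to reduce the $G$-level intersection to a torus-level one you again invoke ``a $G$-orbit-closure intersection, if nonempty, is detected by some one-parameter subgroup,'' which is exactly the statement to be proved. The paper's proof consists of carrying out this reduction by adapting Richardson's argument: assuming for contradiction that $\overline{H\cdot[w]}\cap\mathscr{Z}=\emptyset$ for every algebraic torus $H$, it separates each $\overline{T\kappa\cdot[w]}$ from $\mathscr{Z}$ by $T$-invariant regular functions on $T$-invariant affine opens, patches their absolute values into a strictly positive continuous function $F$ on the maximal compact $K$ via a partition of unity, and then uses the Cartan decomposition $G=KTK$ together with the $T$-invariance of the separating functions to force $F(\kappa_l)\to 0$ along a sequence witnessing $\overline{\mathcal{O}}_{w}\cap\mathscr{Z}\neq\emptyset$, a contradiction; the torus case itself is Sun's Lemma, quoted from earlier work. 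To complete your proof you must either supply this Richardson-type reduction (including the existence of the finite $T$-invariant affine cover it presupposes) or produce a citable projective version of Kempf's theorem; neither is automatic.
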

The point of this result is that we would like to actually \emph{check} whether or not a given $(v,w)$ is semistable, that is, we would like to know if the Zariski closures of the orbits $\mathcal{O}_{vw}$ and $\mathcal{O}_v$ are disjoint. Theorem \ref{numericalcriterion} converts this problem into a ``polyhedral-combinatorial'' problem which is much easier to solve.  An application of Theorem \ref{numericalcriterion} generalizing David Hilbert's famous example of binary forms is the following.
   \begin{example}
\emph{Let $\mathbb{V}_e$ and  $\mathbb{V}_d$ be irreducible $SL(2,\mathbb{C})$ modules with highest weights $e , d \in\mathbb{N}$. These are well known to be spaces of homogeneous polynomials in two variables. Let $f$ and $g$ be two such polynomials in $\mathbb{V}_e\setminus\{0\}$ and $\mathbb{W}_d\setminus\{0\}$
respectively. Then the pair $(f,g)$ is semistable if and only if
\begin{align}\label{d-e/2}
e\leq d \ \mbox{and for all $p\in \mathbb{P}^1$}\ \mbox{ord}_p(g)-\mbox{ord}_p(f)\leq \frac{d-e}{2} \ .
\end{align}}
\end{example} 
   
   Theorem \ref{numericalcriterion} is closely related to the following condition, \emph{Property (P)}, which may or may not hold for a complex linear algebraic group $G$. Observe that this property is the projective version of  Property (A) from \cite{birkes71}.

\ \\   
\noindent \textbf{Property (P) .}  \emph{ If $\rho:G\ra GL(\mathbb{W})$ is a finite dimensional complex rational representation of $G$, and if $\mathscr{Z}$ is a non-empty $G$ invariant closed subvariety which meets $\overline{\mathcal{O}}_w\setminus {\mathcal{O}}_w$ , then there exists an element $z_0$ of  $\mathscr{Z}$ and an algebraic one-parameter subgroup $\lambda:\mathbb{C}^*\ra G$ such that}
\begin{align}
\lim_{\alpha\ra 0}\lambda(\alpha)\cdot [w]=z_0 \ .
\end{align}
If Property (P) holds for reductive groups then Theorem \ref{numericalcriterion} follows at once. 
\begin{remark}
\emph{It is known that when $G=T$ (an algebraic torus) Property (P) holds by Sun's Lemma (see \cite{paul2011}) .}
\end{remark}

We will show that under an assumption, which the author feels is not really necessary,  that Property (P) is true . The assumptions are easily seen to hold for the case at hand, namely when $\mathscr{Z}$ is the intersection  with $\overline{\mathcal{O}}_w$ of a $G$ invariant \emph{linear} subspace of $\mathbb{P}(\mathbb{W})$.  The argument is an adaption of one originally due to R. Richardson (see \cite{birkes71} pgs. 464-465). 
\begin{proof} The proof is by contradiction. Therefore we assume that  
\begin{align}
\mathscr{Z}\cap \big(\overline{\mathcal{O}}_w\setminus {\mathcal{O}}_w\big) \neq \emptyset
\end{align}
and for \emph{every} algebraic torus $H$ in $G$ we have 
\begin{align}
\overline{H\cdot[w]}\cap \mathscr{Z}=\emptyset \ .
\end{align}

Fix any maximal algebraic torus $T$ of $G$. We assume that there is a finite collection 
\begin{align}
\mathscr{C}=\{ U_1 , U_2, \dots , U_m\}
\end{align}
of $T$ invariant affine open sets of  $\mathbb{P}(\mathbb{W})$ satisfying
\begin{enumerate}
 \item $\mathscr{Z}\cap U_i\neq \emptyset$ for all $i\in\{1,2,\dots,m\} $ .\\
 \ \\
 \item $\mathscr{Z}\subset \cup_{i=1}^mU_i $ . \\
 \ \\
 \item For all $\kappa\in K$ (a maximal compact of $G$) there is an $i=i(\kappa)$ such that\\
 \begin{align}
 T\kappa\cdot [w]\subset U_{i(\kappa)} \ .
 \end{align}
 \end{enumerate}
 
 The last assumption implies that for every $\kappa\in K$ there is a ball $\kappa\in B_{\delta(\kappa),i(\kappa)}\subset K$ (in any Riemannian metric on $K$ ) such that 
 \begin{align}
 x\cdot[w]\in U_{i(\kappa)} \ \mbox{for all $x\in B_{\delta(\kappa),i(\kappa)}$}\ .
 \end{align}
Furthermore we have that
\begin{align}
 \overline{T\kappa\cdot [w]}\cap U_{i(\kappa)} \cap \mathscr{Z}\cap U_{i(\kappa)}=\emptyset \ .
 \end{align}
 Therefore there exists a $T$ invariant
 \begin{align}
 f=f_{\kappa , i(\kappa)}\in \mathbb{C}[U_{i(\kappa)}]^T \ 
 \end{align}
 satisfying
 \begin{enumerate}
 \item $ f|_{ \overline{T\kappa\cdot [w]}\cap U_{i(\kappa)}}\equiv 1$ . \\
 \ \\
\item $f|_{\mathscr{Z}\cap U_{i(\kappa)}}\equiv 0 $ .
\end{enumerate}
  
 Therefore by choosing $r(\kappa)<\delta(\kappa)$ we may assume that
 \begin{align}
 |f_{\kappa , i(\kappa)}(x\cdot[w])|>0 \ \mbox{for all $x\in B_{r(\kappa),i(\kappa)}$} \ .
 \end{align}
 Now we extract a finite covering  $\{B_{ij}\}$ of $K$ by balls $B_{ij}$ satisfying
 \begin{enumerate}
 \item $x\cdot [w]\in U_{j}$  for all $x\in B_{ij}$ . \\
 \ \\
 \item $|f_{ij}(x\cdot [w])|>0$ for all $x\in B_{ij}$ .
 \end{enumerate}
 Let $\{\varphi_{ij}\}$ be a partition of unity subordinate to this cover. We define a continuous function $F$ on $K$ by
 \begin{align}
 F(x):= \sum_{i,j}\varphi_{ij}(x)|f_{ij}(x\cdot [w])| \ .
 \end{align}

 It is clear from the construction that $F$ is positive. Therefore by compactness $F$ has a positive lower bound on $K$.
 Our assumption at the outset was 
 \begin{align}
 \overline{\mathcal{O}_w}\cap\mathscr{Z}\neq \emptyset \ .
 \end{align}
 The Cartan decomposition
 \begin{align}
 G=KTK
 \end{align}
 and the assumption that $\mathscr{Z}$ is closed and $G$ -invariant imply that there is a sequence
 \begin{align}
\lim_{l\ra \infty} t_l\kappa_l\cdot[w]\in \mathscr{Z}\quad t_l\in T \ , \ \kappa_l\in K .
\end{align}
Then by $T$-invariance of the $f_{ij}$ we have
\begin{align}
\begin{split}
 F(\kappa_l)&=\sum_{i,j}\varphi_{ij}(\kappa_l)|f_{ij}(\kappa_l\cdot [w])| \\
\ \\
&=\sum_{i,j}\varphi_{ij}(\kappa_l)|f_{ij}(t_l\kappa_l\cdot [w])| \ .
\end{split}
\end{align}
Since the $f_{ij}$ vanish on $\mathscr{Z}\cap U_j$  we see that $F(\kappa_l)\ra 0$ as $l\ra \infty$. This contradicts the fact that $F$ has a positive lower bound on $K$ and we are done.
\end{proof}
\subsection{A Kempf-Ness type functional}\label{kempfness} In this section we study semistability in terms of a Kempf-Ness type functional. In fact it is from this point of view that the author arrived at the definition of semistability. As always $(\mathbb{V}, v)$ and $(\mathbb{W}, w)$ are finite dimensional complex rational representations of $G$ together with a pair of nonzero vectors. We equip $\mathbb{V}$ and $\mathbb{W}$ with Hermitian norms . We are interested in the function on $G$  :
 \begin{align}\label{energy}
 G\ni \sigma\ra p_{v,w}(\sigma):=\log||\sigma\cdot w||^2-\log||\sigma\cdot v||^2 \ .
 \end{align}
 Then we have the following fact.
 \begin{proposition}\label{vwlowerbound}
\emph{  $p_{vw}$ is bounded from below on $G$ if and only if $(v,w)$ is semistable.}
 \end{proposition}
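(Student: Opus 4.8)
The plan is to realize $p_{vw}$ as the logarithm of a quantity read off from a single continuous function on $\mathbb{P}(\mathbb{V}\oplus\mathbb{W})$, so that its boundedness from below turns into the statement that a compact set avoids a closed subvariety. Concretely, fix the given Hermitian norms on $\mathbb{V}$ and $\mathbb{W}$ and introduce
\[
\Phi:\mathbb{P}(\mathbb{V}\oplus\mathbb{W})\ra[0,1],\qquad \Phi([(\xi,\eta)]):=\frac{\|\xi\|^2}{\|\xi\|^2+\|\eta\|^2}\ .
\]
This is well defined (the right-hand side is unchanged under scaling $(\xi,\eta)\mapsto(c\xi,c\eta)$) and continuous, and $\Phi^{-1}(1)$ is exactly the linear subspace $\mathbb{P}(\mathbb{V}\oplus\{0\})$. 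A one-line computation gives, for $\sigma\in G$,
\[
\Phi(\sigma\cdot[(v,w)])=\frac{\|\sigma\cdot v\|^2}{\|\sigma\cdot v\|^2+\|\sigma\cdot w\|^2}=\frac{1}{1+e^{p_{vw}(\sigma)}}\ ,
\]
so $p_{vw}$ is bounded from below on $G$ if and only if $\Phi$ is bounded away from $1$ along the orbit $\mathcal{O}_{vw}$. Now $\overline{\mathcal{O}}_{vw}$ is compact — it is a closed subset of the compact space $\mathbb{P}(\mathbb{V}\oplus\mathbb{W})$, and (by the standard fact that a complex orbit has the same closure in the Zariski and in the classical topology) it coincides with the Zariski closure appearing in Definition~\ref{pair}. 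Since $\Phi$ is continuous, $\Phi$ is bounded away from $1$ on $\mathcal{O}_{vw}$ if and only if $\max_{\overline{\mathcal{O}}_{vw}}\Phi<1$, i.e. if and only if $\overline{\mathcal{O}}_{vw}\cap\mathbb{P}(\mathbb{V}\oplus\{0\})=\emptyset$.

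It then remains to match the condition $\overline{\mathcal{O}}_{vw}\cap\mathbb{P}(\mathbb{V}\oplus\{0\})=\emptyset$ with semistability, i.e. with $\overline{\mathcal{O}}_{vw}\cap\overline{\mathcal{O}}_v=\emptyset$. One implication is free because $\overline{\mathcal{O}}_v\subset\mathbb{P}(\mathbb{V}\oplus\{0\})$; this already gives ``$p_{vw}$ bounded below $\Rightarrow$ semistable''. For the converse I would show that in fact $\overline{\mathcal{O}}_{vw}\cap\mathbb{P}(\mathbb{V}\oplus\{0\})\subset\overline{\mathcal{O}}_v$: a point of $\overline{\mathcal{O}}_{vw}$ lying on $\mathbb{P}(\mathbb{V}\oplus\{0\})$ has the form $[(\xi,0)]$ with $\xi\neq0$; choosing $\sigma_n\in G$ and scalars $c_n\neq 0$ with $c_n(\sigma_n\cdot v,\sigma_n\cdot w)\to(\xi,0)$, one gets $[\sigma_n\cdot v]=[c_n\sigma_n\cdot v]\to[\xi]$, hence $[\xi]\in\overline{\mathcal{O}}_v$. (Equivalently, the rational map $[(\xi,\eta)]\mapsto[\xi]$ is a morphism off $\mathbb{P}(\{0\}\oplus\mathbb{W})$, it carries $\mathcal{O}_{vw}$ onto $\mathcal{O}_v$, and it restricts to the identity on $\mathbb{P}(\mathbb{V}\oplus\{0\})$.) Combining, $\overline{\mathcal{O}}_{vw}\cap\mathbb{P}(\mathbb{V}\oplus\{0\})=\overline{\mathcal{O}}_{vw}\cap\overline{\mathcal{O}}_v$, and the proposition follows from the first paragraph.

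The formal part — converting boundedness below of $p_{vw}$ into a disjointness statement about the compact set $\overline{\mathcal{O}}_{vw}$ — is routine once $\Phi$ is in place. I expect the only step requiring genuine care is the last identification, namely ruling out that $\overline{\mathcal{O}}_{vw}$ meets the linear slice $\mathbb{P}(\mathbb{V}\oplus\{0\})$ at a point that is not already in $\overline{\mathcal{O}}_v$; a secondary, entirely standard point is the compatibility between the Zariski and classical closures of the orbit, which is what licenses the compactness argument.
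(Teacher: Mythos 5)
Your argument is correct and is essentially the paper's: your function $\Phi$ is exactly $\cos^2$ of the Fubini-Study distance to the linear subspace $\mathbb{P}(\mathbb{V}\oplus\{0\})$, so the identity $\Phi(\sigma\cdot[(v,w)])=\big(1+e^{p_{vw}(\sigma)}\big)^{-1}$ is a restatement of Lemma \ref{distance}, and both proofs then conclude by compactness of the orbit closures inside $\mathbb{P}(\mathbb{V}\oplus\mathbb{W})$. The only minor divergence is that where the paper passes from the linear slice to $\overline{\mathcal{O}}_v$ via the inequality $d_g(\sigma\cdot[(v,w)],\sigma\cdot[(v,0)])\leq d_g(\sigma\cdot[(v,w)],\tau\cdot[(v,0)])$, you instead prove the set identity $\overline{\mathcal{O}}_{vw}\cap\mathbb{P}(\mathbb{V}\oplus\{0\})=\overline{\mathcal{O}}_{vw}\cap\overline{\mathcal{O}}_v$ directly; both steps are valid and play the same role.
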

 The proposition is a consequence of the following observation.
 \begin{lemma}\label{distance}(Paul \cite{paulcm2012})
 \begin{align}
 p_{v,w}(\sigma)=\log\tan^2d_g(\sigma\cdot [(v,w)] , \sigma\cdot [(v,0)]) \ ,
 \end{align}
 \emph{where $d_g$ denotes the distance in the Fubini-Study metric on} $\mathbb{P}(\mathbb{V}\oplus\mathbb{W})$ .
 \end{lemma}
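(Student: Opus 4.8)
The plan is to reduce the identity to the elementary closed form for Fubini--Study distance in terms of Hermitian inner products, and then to substitute the two particular lines appearing in the statement. I will use the normalization of the Fubini--Study metric on a projective space $\mathbb{P}(\mathbb{U})$, for $\mathbb{U}$ a finite dimensional Hermitian vector space, under which the geodesic distance between two lines $[a],[b]$ is given by
\begin{align*}
\cos^2 d_g([a],[b])=\frac{|\langle a,b\rangle|^2}{\|a\|^2\,\|b\|^2}\ ,
\end{align*}
an expression that is visibly independent of the choice of representatives and which takes values in $[0,\pi/2]$. The first step is just to record this formula with $\mathbb{U}=\mathbb{V}\oplus\mathbb{W}$, equipped with the orthogonal direct sum of the given Hermitian norms, $\|(x,y)\|^2=\|x\|^2+\|y\|^2$.

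The second step is the substitution. Since the $G$-action on $\mathbb{V}\oplus\mathbb{W}$ is diagonal we have $\sigma\cdot[(v,w)]=[(\sigma v,\sigma w)]$ and $\sigma\cdot[(v,0)]=[(\sigma v,0)]$, and because $v,w\neq 0$ and $\sigma$ acts invertibly both are honest points of $\mathbb{P}(\mathbb{V}\oplus\mathbb{W})$. Using orthogonality of the two summands,
\begin{align*}
&\langle(\sigma v,\sigma w),(\sigma v,0)\rangle=\|\sigma v\|^2\ ,\qquad \|(\sigma v,0)\|^2=\|\sigma v\|^2\ ,\\
&\|(\sigma v,\sigma w)\|^2=\|\sigma v\|^2+\|\sigma w\|^2\ ,
\end{align*}
so that
\begin{align*}
&\cos^2 d_g\big(\sigma\cdot[(v,w)],\sigma\cdot[(v,0)]\big)=\frac{\|\sigma v\|^2}{\|\sigma v\|^2+\|\sigma w\|^2}\ ,\\
&\sin^2 d_g\big(\sigma\cdot[(v,w)],\sigma\cdot[(v,0)]\big)=\frac{\|\sigma w\|^2}{\|\sigma v\|^2+\|\sigma w\|^2}\ .
\end{align*}
Dividing gives $\tan^2 d_g\big(\sigma\cdot[(v,w)],\sigma\cdot[(v,0)]\big)=\|\sigma w\|^2/\|\sigma v\|^2$, and taking logarithms yields exactly $p_{v,w}(\sigma)=\log\|\sigma w\|^2-\log\|\sigma v\|^2$, which is the assertion.

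I do not expect a genuine obstacle; the content is a one-line computation once the distance formula is in place, and the only care required is bookkeeping. One must fix the normalization of the Fubini--Study metric at the outset, since any other normalization rescales $d_g$ and the identity would change only by the corresponding constant factor inside $\tan^2$ (or, for the ``unit sphere'' normalization, by replacing $d_g$ with $d_g/2$). It is also worth noting, both as a check on the logarithm and as the link to Proposition \ref{vwlowerbound}, that $\|\sigma v\|,\|\sigma w\|>0$ forces $d_g(\sigma\cdot[(v,w)],\sigma\cdot[(v,0)])\in(0,\pi/2)$ for every $\sigma\in G$, so both sides of the lemma are finite real-valued functions on $G$; it is precisely this identity that will be used to translate boundedness below of $p_{v,w}$ into a statement about how closely the orbit $\mathcal{O}_{vw}$ approaches $\mathcal{O}_v$, giving Proposition \ref{vwlowerbound}.
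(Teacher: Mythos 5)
Your computation is correct: with the orthogonal direct-sum Hermitian structure on $\mathbb{V}\oplus\mathbb{W}$ and the standard normalization $\cos^2 d_g([a],[b])=|\langle a,b\rangle|^2/(\|a\|^2\|b\|^2)$, one gets $\tan^2 d_g(\sigma\cdot[(v,w)],\sigma\cdot[(v,0)])=\|\sigma\cdot w\|^2/\|\sigma\cdot v\|^2$ exactly as you say. The paper itself offers no proof here --- the lemma is quoted from \cite{paulcm2012} --- so there is nothing to compare against, but your argument is the standard one and the only points needing care (the orthogonality of the two summands and the choice of normalization of $d_g$) are ones you address explicitly.
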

 
Observe that for any $\sigma,\tau \in G$ we have the inequality
\begin{align}\label{sigma-tau}
d_g(\sigma\cdot [(v,w)] , \sigma\cdot [(v,0)])\leq d_g(\sigma\cdot [(v,w)] , \tau\cdot [(v,0)])\ .
\end{align}
 As a corollary of (\ref{sigma-tau}) and Lemma \ref{distance} we have the much more refined version of Proposition \ref{vwlowerbound}.
\begin{corollary}\label{infi}\emph{ The infimum of the energy of the pair $(v,w)$ is as follows}
 \begin{align}\label{inf}
 \inf_{\sigma\in G}p_{vw}(\sigma)=\log\tan^2d_g(\overline{\mathcal{O}}_{vw},\overline{\mathcal{O}}_{v}) \ .  
 \end{align}  
 \end{corollary}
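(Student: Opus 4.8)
The plan is to read off the formula (\ref{inf}) as a formal consequence of Lemma \ref{distance} together with the inequality (\ref{sigma-tau}), using two elementary observations. First, the extended-real-valued function $t\mapsto\log\tan^2 t$ is continuous and strictly increasing on $[0,\pi/2)$ (with value $-\infty$ at $t=0$), hence it commutes with the operation of taking an infimum over any family of elements of $[0,\pi/2)$ whose infimum again lies in $[0,\pi/2)$. Second, the $d_g$-distance from a fixed point of $\mathbb{P}(\mathbb{V}\oplus\mathbb{W})$ to a subset is unchanged if that subset is replaced by its closure, since $d_g$ is continuous.

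The first step I would carry out is the pointwise refinement
\begin{align*}
d_g\bigl(\sigma\cdot[(v,w)],\,\sigma\cdot[(v,0)]\bigr)=d_g\bigl(\sigma\cdot[(v,w)],\,\overline{\mathcal{O}}_v\bigr)\qquad(\sigma\in G)\ .
\end{align*}
The inequality ``$\geq$'' is immediate since $\sigma\cdot[(v,0)]\in\mathcal{O}_v\subset\overline{\mathcal{O}}_v$; for ``$\leq$'' one writes $d_g(\sigma\cdot[(v,w)],\overline{\mathcal{O}}_v)=\inf_{\tau\in G}d_g(\sigma\cdot[(v,w)],\tau\cdot[(v,0)])$ and invokes (\ref{sigma-tau}), which says exactly that every term of this infimum is $\geq d_g(\sigma\cdot[(v,w)],\sigma\cdot[(v,0)])$. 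Feeding this identity into Lemma \ref{distance} yields the sharpened form of Proposition \ref{vwlowerbound}, namely $p_{v,w}(\sigma)=\log\tan^2 d_g\bigl(\sigma\cdot[(v,w)],\overline{\mathcal{O}}_v\bigr)$ for every $\sigma\in G$.

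Next I would pass to the infimum over $\sigma\in G$. As $\sigma$ ranges over $G$ the point $\sigma\cdot[(v,w)]$ ranges over the orbit $\mathcal{O}_{vw}$, so
\begin{align*}
\inf_{\sigma\in G}d_g\bigl(\sigma\cdot[(v,w)],\overline{\mathcal{O}}_v\bigr)=d_g(\mathcal{O}_{vw},\overline{\mathcal{O}}_v)=d_g(\overline{\mathcal{O}}_{vw},\overline{\mathcal{O}}_v)\ ,
\end{align*}
the last step by the closure remark applied in the first variable. This common value lies in $[0,\pi/2)$: already at $\sigma=e$ one has $d_g([(v,w)],[(v,0)])<\pi/2$, since $\langle(v,w),(v,0)\rangle=\|v\|^2\neq 0$, so the infimum is strictly less than $\pi/2$. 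Hence $\log\tan^2$ may be pulled through the infimum, and
\begin{align*}
\inf_{\sigma\in G}p_{v,w}(\sigma)=\log\tan^2\Bigl(\inf_{\sigma\in G}d_g(\sigma\cdot[(v,w)],\overline{\mathcal{O}}_v)\Bigr)=\log\tan^2 d_g(\overline{\mathcal{O}}_{vw},\overline{\mathcal{O}}_v)\ ,
\end{align*}
which is (\ref{inf}).

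I do not expect a genuine obstacle here; the statement essentially repackages Lemma \ref{distance} and (\ref{sigma-tau}), and what remains is bookkeeping. The two points demanding a little care are the justification for commuting $\log\tan^2$ with the infimum --- which is precisely why one wants the remark that all the distances in sight, and their infimum, lie in $[0,\pi/2)$, where that function is a continuous increasing bijection onto $[-\infty,\infty)$ --- and the (harmless) appeals to continuity of $d_g$ that allow one to replace the orbits by their closures. As a bonus this argument reproves Proposition \ref{vwlowerbound}: since $\overline{\mathcal{O}}_{vw}$ and $\overline{\mathcal{O}}_v$ are compact, the right-hand side of (\ref{inf}) is finite if and only if $d_g(\overline{\mathcal{O}}_{vw},\overline{\mathcal{O}}_v)>0$, i.e.\ if and only if $\overline{\mathcal{O}}_{vw}\cap\overline{\mathcal{O}}_v=\emptyset$, the definition of semistability.
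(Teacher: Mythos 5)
Your proof is correct and follows exactly the route the paper intends: the paper derives Corollary \ref{infi} directly from the inequality (\ref{sigma-tau}) and Lemma \ref{distance} without writing out the details, and your argument supplies precisely those details (the identification of $d_g(\sigma\cdot[(v,w)],\sigma\cdot[(v,0)])$ with the distance to $\overline{\mathcal{O}}_v$, followed by commuting the increasing function $\log\tan^2$ with the infimum). No discrepancy with the paper's approach.
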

 Corollary \ref{infi} and Theorem \ref{numericalcriterion}  imply the following result.
 \begin{proposition}\label{energyasymp}
\emph{Assume that
\begin{align}
\inf_{\sigma\in G}p_{vw}(\sigma)=-\infty \ .
\end{align}
Then there exists an algebraic one parameter subgroup $\lambda$ of $G$ such that}
\begin{align}
\lim_{\alpha\ra 0}p_{vw}( {\lambda(\alpha)})= -\infty  \ .
\end{align}
\end{proposition}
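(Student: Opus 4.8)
The plan is to combine Corollary \ref{infi}, Theorem \ref{numericalcriterion}, and a direct asymptotic estimate of $p_{vw}$ along a destabilizing one-parameter subgroup. First, from the hypothesis and Corollary \ref{infi} we have $\log\tan^2 d_g(\overline{\mathcal{O}}_{vw},\overline{\mathcal{O}}_v)=-\infty$, so $d_g(\overline{\mathcal{O}}_{vw},\overline{\mathcal{O}}_v)=0$. Both $\overline{\mathcal{O}}_{vw}$ and $\overline{\mathcal{O}}_v$ are Zariski closures of orbits, hence projective varieties, hence compact in the classical topology, which $d_g$ metrizes; two compact subsets of a metric space at distance zero must intersect, so $\overline{\mathcal{O}}_{vw}\cap\overline{\mathcal{O}}_v\neq\emptyset$. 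By Theorem \ref{numericalcriterion} the pair $(v,w)$ is then not numerically semistable, so by Definition \ref{numerical} there exist a maximal algebraic torus $H\leq G$ and a one-parameter subgroup $\lambda=\lambda^u$ of $H$ with
\begin{align*}
w_{\lambda}(w)>w_{\lambda}(v)\ .
\end{align*}

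Next I would evaluate $p_{vw}$ along this $\lambda$. Replacing the two given Hermitian norms on $\mathbb{V}$ and $\mathbb{W}$ by $H$-invariant ones changes $p_{vw}$ by a globally bounded function and affects neither the hypothesis nor the conclusion, so I may assume the weight decompositions $\mathbb{V}=\bigoplus_{a}\mathbb{V}_a$ and $\mathbb{W}=\bigoplus_{a}\mathbb{W}_a$ are orthogonal. Writing $v=\sum_a v_a$ and $w=\sum_a w_a$, for $\alpha\in\mathbb{C}^*$ we get
\begin{align*}
\|\lambda(\alpha)\cdot w\|^2=\sum_{a\in\mathscr{A}(w)}|\alpha|^{2(a,u)}\,\|w_a\|^2\ ,\qquad
\|\lambda(\alpha)\cdot v\|^2=\sum_{a\in\mathscr{A}(v)}|\alpha|^{2(a,u)}\,\|v_a\|^2\ .
\end{align*}
Factoring out the smallest power of $|\alpha|$ occurring in each sum — which by definition of the weight is $|\alpha|^{2w_{\lambda}(w)}$, resp. $|\alpha|^{2w_{\lambda}(v)}$, with strictly positive leading coefficient $\sum_{(a,u)=w_{\lambda}(w)}\|w_a\|^2$, resp. $\sum_{(a,u)=w_{\lambda}(v)}\|v_a\|^2$ — yields
\begin{align*}
\log\|\lambda(\alpha)\cdot w\|^2=2\,w_{\lambda}(w)\log|\alpha|+O(1)\ ,\qquad
\log\|\lambda(\alpha)\cdot v\|^2=2\,w_{\lambda}(v)\log|\alpha|+O(1)
\end{align*}
as $\alpha\to 0$. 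Subtracting, $p_{vw}(\lambda(\alpha))=2\big(w_{\lambda}(w)-w_{\lambda}(v)\big)\log|\alpha|+O(1)$, and since the coefficient is a positive integer while $\log|\alpha|\to-\infty$, we conclude $\lim_{\alpha\to 0}p_{vw}(\lambda(\alpha))=-\infty$.

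The only point requiring a word of justification is the passage from $d_g(\overline{\mathcal{O}}_{vw},\overline{\mathcal{O}}_v)=0$ to $\overline{\mathcal{O}}_{vw}\cap\overline{\mathcal{O}}_v\neq\emptyset$, which rests on compactness of the orbit closures; one should also check that the error terms above are genuinely $O(1)$, which holds precisely because the minimal-weight components of $v$ and of $w$ are nonzero (being components of nonzero vectors). Apart from these remarks the statement is an immediate consequence of Corollary \ref{infi} and Theorem \ref{numericalcriterion}, where the substantive work already resides.
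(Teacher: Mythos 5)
Your proposal is correct and follows essentially the same route as the paper: the paper's (very terse) proof likewise deduces non-semistability from the unboundedness via the Kempf--Ness interpretation, invokes Theorem \ref{numericalcriterion} to produce a one-parameter subgroup with $w_{\lambda}(w)>w_{\lambda}(v)$, and concludes from the expansion $p_{vw}(\lambda(\alpha))=(w_{\lambda}(w)-w_{\lambda}(v))\log|\alpha|^2+O(1)$. You have merely filled in the details the paper leaves implicit (compactness of the orbit closures and the weight-space computation behind the expansion), and these are all correct.
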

\begin{proof}
The proof follows at once from Theorem \ref{numericalcriterion} and the expansion (as $\alpha\ra 0$ )
\begin{align}
p_{vw}(\lambda(\alpha))=(w_{\lambda}(w)-w_{\lambda}(v))\log|\alpha|^2+O(1)\ .
\end{align}  
 \end{proof}
\section{Discriminants and Resultants of projective Varieties}
To a nonlinear projective variety $X\ra\cpn$ we associate two polynomials , the $X$-\emph{resultant}  $R_X$,  and the $X$-\emph{hyperdiscriminant} $\Delta_{\xhyp}$ (see \cite{gkz}, \cite{paul2011}, and \cite{paulcm2012}).
 Translation invariance of the Mabuchi energy forces us to {normalize the degrees} of  these polynomials  
\begin{align} 
 R=R(X):= R^{\deg(\Delta_{\xhyp})}_{X} \ ,\ \Delta=\Delta(X):=\Delta ^{\deg(R_X)}_{\xhyp} \ .
  \end{align}
 Below we shall let $r$ denote their \emph{common} degree (where $\mu$ denotes the average of the scalar curvature of $\om$) 
 \begin{align}
 r=\deg(\Delta_{\xhyp})\deg(R_X)=d(n+1)(n(n+1)d-d\mu) \ .
 \end{align}
 
  Given a partition $\beta_{\bull}$ with $N$ parts let $\mathbb{E}_{\beta_{\bull}}$ denote the corresponding irreducible  $G:=\slnc$ module. 
   Let $X\ra\cpn$ be a linearly normal complex projective variety. Then we have
 \begin{align}
\begin{split}
& X  \rightarrow R=R(X) \in \elam\setminus\{0\} \ , \ (n+1)\lambda_{\bull}= \big(\overbrace{ {r} , {r} ,\dots, {r}}^{n+1},\overbrace{0,\dots,0}^{N-n}\big) \ .\\
\ \\
& X \rightarrow \Delta=\Delta(X) \in \emu\setminus\{0\} \ , \  n\mu_{\bull}= \big(\overbrace{ {r} , {r} ,\dots, {r}}^{n },\overbrace{0,\dots,0}^{N+1-n}\big) \ . 
\end{split}
\end{align}
 Moreover, the associations are $G$ equivariant:
\begin{align}
 R(\sigma\cdot X)=\sigma\cdot R(X) \ , \ \Delta(\sigma\cdot X)=\sigma\cdot \Delta(X) \ .
 \end{align}
The irreducible modules $\elam$ and $\emu$ admit the following descriptions
\begin{align}
\begin{split}
&\elam\cong  H^0(\mathbb{G}(N-n-1,N), \mathcal{O}\Big(\frac{r}{n+1}\Big))\cong \mathbb{C}_{r}[M_{(n+1)\times (N+1)}]^{SL(n+1,\mathbb{C})} \\
\ \\
& \emu \cong H^0(\mathbb{G}(N-n ,N), \mathcal{O}\Big(\frac{r}{n}\Big))\cong \mathbb{C}_{r}[M_{n\times (N+1)}]^{SL(n,\mathbb{C})}\ . 
\end{split}
\end{align}
\section{K-energy maps and the Kempf-Ness functional of $(R\ ,\ \Delta)$}
 We require the following result recently proved by the author. Given a numerical polynomial $P$
 \begin{align}
 P(T)=c_n\binom{T}{n}+c_{n-1}\binom{T}{n-1}+O(T^{n-2}) \qquad c_n\in \mathbb{Z}_{>0}\  .
 \end{align}
 We define
 \begin{align}
 \mathscr{H}^P_{\cpn}:=\{\mbox{ all (smooth) subvarieties $X\subset \cpn$ with Hilbert polynomial $P$}\} \ .
 \end{align}
 Consider the $G$-equivariant morphisms 
 \begin{align}
 R\ , \ \Delta : \mathscr{H}^P_{\cpn}\ra \mathbb{P}(\elam) \ ,\ \mathbb{P}(\emu) \ .
 \end{align}
In the statement of the next result $h$ denotes some fixed positive Hermitian form on $\cn$. This choice induces Hermitian metrics on $\elam$ and $\emu$. 
\begin{theorem}\label{core} (Paul \cite{paul2011}, \cite{paulcm2012}) 
\emph{ There is a constant $M$ depending only on $c_n$, $c_{n-1}$ and $h$ such that for all $[X]\in \mathscr{H}^P_{\cpn}$ and all $\sigma\in G$ we have}
 \begin{align}
   |(n+1) \nu_{{\om_{FS}}|_{X}}(\sigma)- \frac{1}{c_n^2}
  p_{R(X)\Delta(X)}(\sigma) | \leq M \ .  
 \end{align}
 \end{theorem}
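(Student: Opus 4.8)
The plan is to split the Mabuchi energy of a Bergman metric into its ``entropy'' and ``energy'' pieces and to recognize the first as a logarithmic norm of the $X$-hyperdiscriminant and the second as a logarithmic norm of the $X$-resultant, all remainders being controlled uniformly over $\mathscr{H}^P_{\cpn}$. Write $\om:={\om_{FS}}|_X$, let $\om_{\vps}=\om+\sqrt{-1}\,\dl\dlb\vps$ for the Bergman potential $\vps$ attached to $\sigma\in G$, and let $V=\int_X\om^n$. I would start from the classical decomposition of the Mabuchi functional,
\begin{align*}
\nu_{\om}(\vps)=\frac{1}{V}\int_X\log\!\Big(\frac{\om_{\vps}^n}{\om^n}\Big)\om_{\vps}^n \;+\; \mathcal{E}_{\om}(\vps)\ ,
\end{align*}
where $\mathcal{E}_{\om}$ is the ``energy part'' assembled from the Aubin functionals $I_\om,J_\om$ and the Ricci form of $\om$, and treat the two summands in turn.

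\emph{The energy term.} Restricted to Bergman metrics, $\mathcal{E}_{\om}(\vps)$ is, up to an additive error bounded in terms of $c_n,c_{n-1},h$ only, a fixed multiple of $\log\|\sigma\cdot R_X\|^2$, where $R_X$ is the Chow form (the $X$-resultant). This is the by-now standard fact that the Knudsen--Mumford/Deligne metric on the CM line bundle — whose defining section is, after normalization, $R_X$ — computes the Monge--Amp\`ere energy plus lower-order curvature integrals; I would invoke the Tian--Zhang and Phong--Ross--Sturm description of that metric together with the Bergman expansion, the remainder being a universal polynomial in the Chern numbers of $X$ (hence a function of $c_n,c_{n-1}$ by Hirzebruch--Riemann--Roch) plus a contribution of $h$. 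See \cite{paul2011}, \cite{paulcm2012}.

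\emph{The entropy term, and assembly.} This is the heart of the matter. The ratio $\om_{\vps}^n/\om^n$ at a point of $X$ is the squared Jacobian of the map $X\ra\sigma\cdot X$; pulling this back to $\xhyp$ (whose second factor records the ramification directions of generic projections $X\ra\mathbb{P}^n$, i.e. the conormal geometry of $\xhyp$ under the Segre embedding into $\mathbb{P}(M_{n\times(N+1)})$), the entropy integral becomes an integral over $\xhyp$ of the logarithm of a distance to the projective dual of $\xhyp$ measured along $\sigma$. By the mean-value (Mahler/height-of-the-dual-variety) formula for the norm of a Chow form from \cite{gkz}, this equals $\log\|\sigma\cdot\Delta_{\xhyp}\|^2$ up to a bounded remainder — precisely the hyperdiscriminant asymptotics of \cite{paulcm2012}, whose argument I would reproduce while tracking uniformity. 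Combining the two comparisons and invoking the degree normalizations $R=R_X^{\deg\Delta_{\xhyp}}$, $\Delta=\Delta_{\xhyp}^{\deg R_X}$ — which force $R,\Delta$ to have common degree $r$ and absorb the ratio $\deg\Delta_{\xhyp}/\deg R_X$ into a single weight — gives
\begin{align*}
(n+1)\nu_{\om}(\vps)=\frac{1}{c_n^2}\big(\log\|\sigma\cdot\Delta\|^2-\log\|\sigma\cdot R\|^2\big)+O(1)=\frac{1}{c_n^2}\,p_{R\Delta}(\sigma)+O(1)\ ,
\end{align*}
the numerical factors $(n+1)$ and $c_n^{-2}$ emerging from the bookkeeping with $V$, $\deg R_X$ and $\mu$ (recorded by $c_{n-1}$); the scaling ambiguity in the representatives $R(X),\Delta(X)$ is pinned by matching $p_{R\Delta}(\mathrm{id})$ against $\nu_{\om}(0)=0$.

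\emph{Main obstacle.} The difficulty is not the existence of these identities for a fixed $X$ but the \textbf{uniformity} of $M$ over all of $\mathscr{H}^P_{\cpn}$ and all $\sigma\in G$. The $\sigma$-uniformity is the easier half: both sides are left-$K$-invariant functions on $K\backslash G$ whose asymptotics along every one-parameter subgroup $\lambda$ are governed by the integers $w_\lambda(R),w_\lambda(\Delta)$, using the equivariance $R(\sigma\cdot X)=\sigma\cdot R(X)$, $\Delta(\sigma\cdot X)=\sigma\cdot\Delta(X)$. The genuine work is uniformity in $X$: every remainder produced above — the subleading terms of the Bergman and Knudsen--Mumford expansions, and the error in the mean-value formula for the discriminant norm — must be bounded by Hilbert-polynomial data. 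The curvature integrals are handled by Hirzebruch--Riemann--Roch (they are polynomials in $c_n,c_{n-1}$); the remaining geometric error terms I would bound by extending them upper-semicontinuously to the projective Hilbert scheme $\mathrm{Hilb}^P(\cpn)$ and taking the maximum over the compact closure of $\mathscr{H}^P_{\cpn}$. Showing that no remainder degenerates along this closure — i.e. that the comparison does not deteriorate as $X$ acquires singularities — is the step I expect to require the most care.
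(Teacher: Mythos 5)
You should first be aware that the paper does not actually prove Theorem \ref{core}: it is imported verbatim from \cite{paul2011} and \cite{paulcm2012}, so there is no in-paper argument to compare against. Your outline does track the strategy of those references --- Tian's decomposition of $\nu_{\om}$ into an entropy part and an energy part, the identification of the energy part with $\log\|\sigma\cdot R_X\|^2$ via the Chow form and the Deligne/Knudsen--Mumford metrics, and the identification of the entropy part with $\log\|\sigma\cdot\Delta_{\xhyp}\|^2$ via the dual variety of $\xhyp$. But for that last step, which is the entire content of the theorem and of \cite{paulcm2012}, you write ``whose argument I would reproduce while tracking uniformity''; that is a citation, not a proof, so at the decisive point your proposal does exactly what the paper does, namely quote \cite{paulcm2012}.

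The one place where you go beyond citation is the uniformity of $M$ over $\mathscr{H}^P_{\cpn}$, and there your proposed mechanism is a genuine gap. Extending the remainders upper-semicontinuously to the closure of $\mathscr{H}^P_{\cpn}$ in the Hilbert scheme and maximizing over that compact set cannot work as stated: the boundary consists of singular and non-reduced limit schemes on which the Mabuchi energy, the ratio $\om_{\vps}^n/\om^n$, and the comparison identities are not even defined, and there is no a priori reason the error terms extend semicontinuously rather than blow up --- as you concede. In the references no compactness argument is used or needed: the comparison is an essentially exact formula whose error terms are explicit curvature and volume integrals bounded by quantities depending only on $d=c_n$, the average scalar curvature $\mu$ (hence on $c_{n-1}$ by Riemann--Roch), and the fixed Hermitian form $h$, so the uniformity is read off from the formula itself rather than extracted from a degeneration argument. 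Note also that for the deduction of Theorem \ref{mainthm} only the case of a single fixed $X$ with $\sigma$ ranging over $G$ is used, so the Hilbert-scheme uniformity that you identify as the main obstacle is not what the application actually requires.
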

 Theorem \ref{mainthm} follows by applying Proposition \ref{energyasymp} to Theorem \ref{core} .
 
 In the author's view the most important consequence \footnote{ Actually Theorem \ref{mainthm} and this result are equivalent to one another.} of Theorem \ref{mainthm} is the following
 \begin{corollary}\label{polytopes} The Mabuchi energy of $X\ra \cpn$ is uniformly bounded below on $\mathcal{B}$ if and only if
 \begin{align}
 \mathcal{N}(R)\subset \mathcal{N}(\Delta) \quad \mbox{for all maximal algebraic tori $H\leq G$.} \ 
 \end{align}
 \end{corollary}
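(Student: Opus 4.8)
The plan is to deduce Corollary \ref{polytopes} by combining Theorem \ref{mainthm}, Theorem \ref{core}, and the numerical criterion of Theorem \ref{numericalcriterion}, applied to the specific pair $(v,w)=(R(X),\Delta(X))$ living in the $G$-modules $\mathbb{V}=\elam$ and $\mathbb{W}=\emu$. The bridge in both directions is the estimate in Theorem \ref{core}, which says that up to the additive constant $M$ and the positive multiplicative constants $(n+1)$ and $1/c_n^2$, the K-energy $\nu_{{\om_{FS}}|_X}(\sigma)$ agrees with the Kempf--Ness type functional $p_{R(X)\Delta(X)}(\sigma)$. In particular, uniform boundedness below of $\nu_{{\om_{FS}}|_X}$ on $\mathcal{B}$ is equivalent to $\inf_{\sigma\in G}p_{R\Delta}(\sigma)>-\infty$, since $\mathcal{B}$ is identified with $G/K$ (i.e. with $G$ modulo the stabilizer of the base metric) and the Bergman potentials $\varphi_\sigma$ are parametrized by $\sigma\in G$.

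First I would argue the reverse implication. Assume $\mathcal{N}(R)\subset\mathcal{N}(\Delta)$ for all maximal algebraic tori $H\leq G$; by Definition \ref{numerical} this is exactly the statement that $(R,\Delta)$ is numerically semistable, so by Theorem \ref{numericalcriterion} we get $\overline{\mathcal{O}}_{R\Delta}\cap\overline{\mathcal{O}}_{R}=\emptyset$, i.e. the pair $(R,\Delta)$ is semistable in the sense of Definition \ref{pair}. By Proposition \ref{vwlowerbound} the functional $p_{R\Delta}$ is then bounded below on $G$ by some finite constant. Feeding this into Theorem \ref{core} yields
\begin{align*}
(n+1)\nu_{{\om_{FS}}|_X}(\sigma)\geq \frac{1}{c_n^2}\inf_{\tau\in G}p_{R\Delta}(\tau)-M
\end{align*}
for all $\sigma\in G$, which is a uniform lower bound for $\nu_{{\om_{FS}}|_X}$ on $\mathcal{B}$.

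For the forward implication, suppose the weight polytope containment fails for some maximal torus $H$, so $(R,\Delta)$ is not numerically semistable. By Theorem \ref{numericalcriterion} this means $\overline{\mathcal{O}}_{R\Delta}\cap\overline{\mathcal{O}}_{R}\neq\emptyset$, hence by Corollary \ref{infi} (or already Proposition \ref{vwlowerbound}) we have $\inf_{\sigma\in G}p_{R\Delta}(\sigma)=-\infty$. Applying Proposition \ref{energyasymp} produces an algebraic one parameter subgroup $\lambda$ of $G$ with $\lim_{\alpha\ra 0}p_{R\Delta}(\lambda(\alpha))=-\infty$, and by the expansion $p_{R\Delta}(\lambda(\alpha))=(w_\lambda(\Delta)-w_\lambda(R))\log|\alpha|^2+O(1)$ this forces $w_\lambda(\Delta)>w_\lambda(R)$. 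One then has to pass from this one parameter subgroup of $G$ to a genuine degeneration $\lambda$ in $\mathcal{B}$: this is where the contrapositive of Theorem \ref{mainthm} enters, since if $\nu_{{\om_{FS}}|_X}$ were uniformly bounded below on $\mathcal{B}$ it would in particular have a finite limit along every algebraic degeneration, and Theorem \ref{core} transfers such a limit to a lower bound on $\lim_{\alpha\ra 0}p_{R\Delta}(\lambda(\alpha))$, contradicting what we just derived. Thus failure of the polytope containment forces $\nu_{{\om_{FS}}|_X}$ to be unbounded below on $\mathcal{B}$.

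The main obstacle — and the reason the statement is phrased as a corollary rather than an immediate observation — is the bookkeeping that identifies the abstract parameter space $G$ (on which $p_{R\Delta}$ lives) with the space $\mathcal{B}$ of Bergman metrics (on which $\nu_\om$ lives), and the verification that "algebraic degenerations in $\mathcal{B}$" correspond precisely to one parameter subgroups $\lambda$ of $G$ in the sense used by Proposition \ref{energyasymp}; once this dictionary is in place the argument is just a diagram chase through the quoted results. A secondary point requiring a line of care is that Theorem \ref{core} is stated for $\mathscr{H}^P_{\cpn}$ with the constant $M$ depending on $c_n,c_{n-1},h$ but not on $[X]$, so all the constants appearing above are genuinely uniform over $\mathcal{B}$, which is exactly what is needed to conclude.
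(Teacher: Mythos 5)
Your proof is correct and follows exactly the chain of implications the paper intends: the corollary is stated without an explicit proof, as a direct consequence of Theorem \ref{core} (identifying $\nu_{\om}$ on $\mathcal{B}$ with the Kempf--Ness functional $p_{R\Delta}$ on $G$ up to uniform constants), Proposition \ref{vwlowerbound}, and the numerical criterion of Theorem \ref{numericalcriterion}. The only remark is that your forward direction is more roundabout than necessary: once $\inf_{\sigma\in G}p_{R\Delta}(\sigma)=-\infty$ is established, Theorem \ref{core} immediately forces $\nu_{\om}$ to be unbounded below on $\mathcal{B}$, so the detour through Proposition \ref{energyasymp} and the contrapositive of Theorem \ref{mainthm} can be omitted.
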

 \begin{center}\textbf{Acknowledgements}\end{center}
 This work was carried out  during the special trimester in Conformal and K\" ahler geometry at L' Institut Henri Poincar\'e in the Fall of 2012   . The author would like to thank the organizers Matt Gursky, Emmanuel Hebey, and Jeff Viaclovsky for their kind invitation to give a short course on \emph{Canonical K\"ahler metrics and the Stability of Algebraic Varieties}.  It was during these lectures that the author realized he could extend Richardson's argument to the situation relevant for the study of lower bounds on K-energy maps. The author thanks Paul Gauduchon, Christophe Margerin and Nefton Pali for their interest in this work . The author has had the pleasure of discussing this problem over several years with Gang Tian, Xiuxiong Chen, Joel Robbin, and Song Sun, he thanks them for generously sharing their ideas with him. 
 \bibliography{ref}

\begin{thebibliography}{GKZ94}

\bibitem[Bir71]{birkes71}
David Birkes.
\newblock Orbits of linear algebraic groups.
\newblock {\em Ann. of Math. (2)}, 93:459--475, 1971.

\bibitem[GKZ94]{gkz}
I.~M. Gelfand, M.~M. Kapranov, and A.~V. Zelevinsky.
\newblock {\em Discriminants, resultants, and multidimensional determinants}.
\newblock Mathematics: Theory \& Applications. Birkh\"auser Boston Inc.,
  Boston, MA, 1994.

\bibitem[Mab86]{mabuchi}
Toshiki Mabuchi.
\newblock {$K$}-energy maps integrating {F}utaki invariants.
\newblock {\em Tohoku Math. J. (2)}, 38(4):575--593, 1986.

\bibitem[Pau11]{paul2011}
Sean~Timothy Paul.
\newblock Hyperdiscriminant polytopes, {C}how polytopes, and {M}abuchi energy
  asymptotics.
\newblock {\em Annals of Math.},
  (http://annals.math.princeton.edu/articles/2654), 2011.

\bibitem[Pau12]{paulcm2012}
Sean~Timothy Paul.
\newblock C{M} stability of projective varieties.
\newblock {\em arXiv:1206.4923v1}, 2012.

\end{thebibliography}
  \end{document}